\definecolor{webgreen}{rgb}{0,.5,0}
\definecolor{webbrown}{rgb}{.6,0,0}
\theoremstyle{plain}
\newtheorem{theorem}{Theorem}
\newtheorem{corollary}[theorem]{Corollary}
\newtheorem{lemma}[theorem]{Lemma}
\newtheorem{proposition}[theorem]{Proposition}
\theoremstyle{definition}
\newtheorem{definition}[theorem]{Definition}
\newtheorem{problem}[theorem]{Problem}
\theoremstyle{remark}
\newtheorem{remark}[theorem]{Remark}
\newcommand{\seqnum}[1]{\underline{#1}}
\begin{document}

\begin{center}
{\LARGE\bf  Morphisms, Symbolic sequences, and   \\
their Standard Forms
}
\vskip 1cm
\large
F.~Michel Dekking \\
DIAM \\
Delft University of Technology \\
Mekelweg 4, 2628 CD Delft \\
The Netherlands\\
\href{mailto:f.m.dekking@tudelft.nl}{\tt f.m.dekking@tudelft.nl}\\
\end{center}

\begin{abstract}
Morphisms are homomorphisms under the concatenation operation of the set of words over a finite set. Changing the elements of the finite set does not essentially change the  morphism. We propose a way to select a unique representing member out of all these morphisms. This has applications to the classification of the shift dynamical systems generated by morphisms. In a similar way, we propose the selection of a representing sequence out of the class of symbolic sequences over an alphabet of fixed cardinality. Both methods are useful for the storing of symbolic sequences in databases, like  The On-Line Encyclopedia of Integer Sequences. We  illustrate our proposals with the $k$-symbol Fibonacci sequences.
\end{abstract}

\section{Introduction}\label{Intro}

Here is a sequence on an alphabet of six letters:
$$x= 1,4,2,1,6,3,5,4,2,3,5,6,1,4,2,1,\ldots.$$
Why should one be interested in this sequence?
Well, if one projects with the letter-to-letter map $1,3,5\rightarrow 0,\;2,4,6\rightarrow 1$, one obtains the Thue-Morse sequence
$$ 0,1,1,0,1,0,0,1,1,0,0,1,0,1,1,0,\ldots.$$
This is the most famous pure morphic sequence (\seqnum{A010060},  \seqnum{A001285},  \seqnum{A010059}).
The sequence $x$ can be obtained from a 3-block morphism (see Section \ref{sec:Fib}) starting from the Thue-Morse morphism $$0\rightarrow 01,\quad 1\rightarrow 10.$$

\noindent This gives that $x$ itself is pure morphic under the square of the morphism
$$\zeta:\qquad  1\rightarrow 23,\quad 2\rightarrow 14,\quad 3\rightarrow 21,\quad4\rightarrow 56,\quad5\rightarrow 63,\quad 6\rightarrow 54.$$
It is shown by Coven, Dekking, and
Keane \cite{CDK2014} that the shift dynamical system generated by $\zeta$ is topologically isomorphic to the Thue-Morse system, and that there essentially are only 12 injective morphisms (with associated infinite sequences) that have this property.

\noindent How should  $x$ be added to the On-Line Encyclopedia of Integer Sequences?

There is a problem here. The choice of the alphabet is quite arbitrary, another popular choice is $\{0,1,2,3,4,5\}$. But even with a fixed alphabet of 6 letters, there are 6!=720  morphisms, generating as many sequences, which are essentially the same.

The ternary Thue-Morse sequence generated by $a\rightarrow abc,\: b\rightarrow ac,\: c\rightarrow b$ occurs 12 times in OEIS, exhausting all 3! possibilities of the two alphabets $\{0,1,2\}$ and $\{1,2,3\}$. It is inconceivable that this could be done for a pure morphic sequence on an alphabet of, say, 20 letters.
So what one needs is a canonical way to extract {\it one} morphism out of these huge collections.
This will be done in the next section. In Section \ref{sec:symb} we propose a simple way to standardize symbolic sequences. In Section \ref{sec:Fib} and \ref{sec:Cloitre} we illustrate our approach with a discussion of the $k$-symbol Fibonacci morphisms.

\section{Standard forms of morphisms}\label{sec:form}

\noindent Let $\theta$ be a morphism on $r$ letters.
 The {\it standard alphabet} is the set $A=\{1,2,\ldots,r\}$.

 Let $\Pi$ be a permutation on $A$. The {\it permuted} version of $\theta$ is the morphism $\theta'$ given by
 $$\theta'(a)= \Pi^{-1} \theta(\Pi(a)), \quad \mathrm{ for \,}a\in A.$$
A morphism $\theta$ is called {\it uniform} if $|\theta(a)|=|\theta(b)|$ for all $a,b\in A$, where $|w|$ denotes the length of a word $w$.

 \begin{definition} The {\it standard form} of a uniform morphism $\theta$ is obtained by permuting the letters of the standard alphabet in such a way
 that the permuted version has the smallest word $\theta'(1)\theta'(2)\cdots\theta'(r)$ in the lexicographical order on words.
\end{definition}

\noindent As an example, let $\theta$ be the Toeplitz or period doubling morphism, see

 \seqnum{A096268}: Fixed point of the morphism $0 \rightarrow 01, 1 \rightarrow 00$,

 \seqnum{A056832}: Fixed point of the morphism $1\rightarrow 12, 2\rightarrow 11$,

 \seqnum{A035263}: Trajectory of 1 under the morphism $1\rightarrow 10, 0 \rightarrow 11$.

\noindent  The second OEIS entry is on the standard alphabet. The permuted version, with $\Pi= 1 \rightleftarrows 2$ is
$1\rightarrow 22, 2\rightarrow 21$. Since $1211<2212$, the standard form of the morphism is $1\rightarrow 12, 2\rightarrow 11$.

\begin{remark} A caveat: the standard form of a morphism has some undesirable properties. For instance, the square of the standard form need not be equal to the standard form of the square.\qed
\end{remark}

\noindent Note that the standard form of a morphisms is uniquely defined, since
$$\theta'(1)\theta'(2)\cdots\theta'(r)=\theta(1)\theta(2)\cdots\theta(r)\;\Longrightarrow\; \theta'=\theta.$$
 This need not hold in the non-uniform case. For example, if $\theta$ is given by
$$\theta(1)=12,\;   \theta(2)=312,\;   \theta(3)=3123,$$
then $\theta(123)=123123123$, and the same word appears under the  version of $\theta$ permuted by $\Pi=1\rightarrow 2\rightarrow 3$.

\begin{definition} The {\it standard form} of a  morphism $\theta$ is obtained in the following way. First, permute the letters of the standard alphabet in such a way  that the permuted version $\theta'$ has the smallest word $w_{\chi}:=\theta'(1)\theta'(2)\cdots\theta'(r)$ in the lexicographical order on words. Among all permutations that have the same $w_{\chi}$, choose the one with the lexicographically smallest vector $(|\theta'(1)|,|\theta'(2)|,\ldots,|\theta'(r)|)$.
\end{definition}

\noindent  Note that once more, the standard form is uniquely defined.

The standard form of ternary Thue-Morse is the morphism
$1\rightarrow123,\; 2\rightarrow 13,\; 3\rightarrow 2.$

\section{Standard forms of symbolic sequences}\label{sec:symb}

 We now focus on infinite sequences generated by morphisms, and in particular fixed points of morphisms. These are typical examples of what we call symbolic sequences, where a change of symbols does not change the essential properties of the sequence. An example of a non-symbolic sequence on four symbols is $(s_n)$=\seqnum{A002828}, which gives the least number of squares that add up to $n$.

Without loss of generality we assume that all symbols of $A=\{1,\dots,r\}$  occur in a sequence over $A$. We then have the following very simple way to pick a representing sequence.

\begin{definition} The {\it standard form} of a  sequence $x$ over $A$ is $\Pi(x)$, where $\Pi$ is the permutation of $A$ which yields the smallest sequence in lexicographical order.
\end{definition}

For example, the standard form of the  the Toeplitz or period doubling sequence is
$1,2,1,1,1,2,1,2,1,2,1,1,1,2,1,1,\dots,$
and the standard form of the ternary Thue-Morse sequence is $1,2,3,1,3,2,1,2,3,2,1,3,1,\dots$.
It is easily seen that both sequences are the unique fixed point of the corresponding morphism, given in its standard form.
This will not happen in general. An exception, for example, is the 6-symbol Thue-Morse morphism of Section~\ref{Intro}. For archiving purposes we recommend to store both the fixed points of the standard form of the morphism and the standard form of these fixed points.

\section{$k$-symbol Fibonacci morphisms and sequences}\label{sec:Fib}

For any $N$ the $N$-block morphism $\hat{\theta}_N$ of a  morphism $\theta$ yields pure morphic sequences on an alphabet of $p(N)$ symbols, where $p(\cdot)$ is the subword complexity function of a sequence generated by $\theta$ (cf.\ \cite[p.~95]{Queff}).

In this section we take a look at another famous morphism, the  Fibonacci morphism
$$ \varphi:\qquad 0\rightarrow 01,\quad 1\rightarrow 0.$$
It is well known (see, e.g.,\cite[p.~105]{Queff}) that  $p(N)=N+1$, so  $\hat{\varphi}_N$ is a morphism on an alphabet of $N+1$ symbols.

We describe how to obtain for $N=2$ the $3$-symbol Fibonacci morphism  $\hat{\varphi}_2$.
The blocks of length 2 occurring in the language of  $\varphi$ are 00, 01, and 10.
We have
$$\varphi(00)=0101=:u_1u_2u_3u_4,\quad    \varphi(01)=010=:v_1v_2v_3, \quad    \varphi(10)=001=:w_1w_2w_3.$$
The rule is now that the length of $\hat{\varphi}_2(j_1j_2)$ is equal to the length of $\varphi(j_1)$.
So since $u_1u_2,\,u_2u_3=01,\,10$, $v_1v_2,v_2v_3=01,10$ and  $w_1w_2=00$, one obtains on the 2-blocks
$$00\rightarrow 01,\,10, \quad  01\rightarrow 01,\,10, \quad 10\rightarrow 00.$$
Coding these to the standard alphabet $\{1,2,3\}$ by $00\leftrightarrow 3, 01\leftrightarrow 1$ and $10 \leftrightarrow 2$, we obtain the standard form
$$\hat{\varphi}_2(1)=12 \quad \hat{\varphi}_2(2)=3, \quad \hat{\varphi}_2(3)=12.$$
The unique fixed point of $\hat{\varphi}_2$  is
$$x^{(2)}= 1, 2, 3, 1,2,  1, 2, 3, 1, 2, 3,1, 2,1,2,3,1,2,1,2,3,\ldots.$$
This sequence will not be found\footnote{November 2015} on OEIS.
However, the sequence obtained by mapping $1\rightarrow 2,\, 2\rightarrow 0,\, 3\rightarrow 1$ is sequence \seqnum{A159917}, and is described as the
``Fixed point of the morphism $0 \rightarrow 01,\, 1 \rightarrow 2,\,  2 \rightarrow 01$". Except for a relationship with the so-called  Fibbinary numbers
(\seqnum{A003714}), no other properties are given.
We will make a connection with the sequence $(a_n)=0, 1, 0, 2, 1, 0, 2, 0, 2,1, 0, 2, 1, 0,\ldots$ given in \seqnum{A120614}. Its definition is
\begin{equation}\label{eq:120614}
a_n=g_n-g_{n-1}\; \mathrm{where\:} g_k=\lfloor\Phi\lfloor k/\Phi\rfloor\rfloor\; \mathrm{and}\: \Phi=(1+\sqrt{5})/2.
\end{equation}
From its comments one can deduce that the sequence $1,0, 2, 1, 0, 2, 0, 2,1, 0, 2, 1, 0,\ldots$  obtained after deleting the initial entry 0 is a fixed point of the morphism $$0\rightarrow  102,\qquad 1\rightarrow 102,\qquad 2\rightarrow 02.$$

Coding this morphism to the standard alphabet via $0 \rightarrow 1, 1 \rightarrow 3, 2 \rightarrow 2$ we obtain the morphism $\theta$ given by
$$\theta(1) = 312,\quad \theta(2) = 12,\quad \theta(3) = 312.$$
We will compare $\theta$ with $\hat{\varphi}_2^2$. Note that
$$\hat{\varphi}_2^2(1) = 123,\quad \hat{\varphi}_2^2(2) = 12,\quad \hat{\varphi}_2^2(3)=123.$$
We say a morphism $\alpha$ on $A$ can be rotated if $(\alpha(a))_1 = b$ for all $a \in A$, and some fixed $b \in A$.
The { \it rotated morphism} is the morphism denoted $\rho\alpha$, defined by
$$\rho\alpha(a) = b^{-1}\,\alpha(a)\,b\quad {\rm for\: all\:} a \in A.$$
In the literature such a rotated morphism is also known as a conjugated morphism.
Note that $\rho^2 \hat{\varphi}_2^2= \theta$. This implies that $ \hat{\varphi}_2^2$
and $\theta$ generate the same language (this is not hard to prove, but  see also \cite[Lemma 3]{ABCD}). Thus arbitrarily long subwords of $x^{(2)}$ will occur in the sequence $(a_n)$ from Equation~(\ref{eq:120614}), standardized, and conversely. Actually, a little bit more is true. In the following we write shortly $\eta:=\hat{\varphi}_2^2$.

\begin{proposition}\label{prop: Fib3}
Let $\eta$ be defined by $\eta(1)=123,\; \eta(2)= 12,\; \eta(3)=123$, and let $\theta$ be defined by $\theta(1)= 312,\; \theta(2)= 12,\; \theta(3)= 312$. Then the fixed point $12312123123\cdots$ of $\eta$ equals the left shift of the fixed point $31231212312\cdots$ of $\theta$.
\end{proposition}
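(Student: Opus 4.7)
The plan is to turn the rotation relation $\theta = \rho^2 \eta$ into a concrete concatenation identity and then apply it to the fixed point of $\theta$. The first step is the letter-wise identity
\[ 12 \cdot \theta(a) = \eta(a) \cdot 12, \qquad a = 1,2,3, \]
which one verifies by inspection: both sides equal $12312$, $1212$, and $12312$ respectively. Concatenating letter by letter extends this identity to every finite word $w \in \{1,2,3\}^*$ as $12 \cdot \theta(w) = \eta(w) \cdot 12$. Applying the extended identity to the prefix $u_0 \cdots u_n$ of an infinite sequence $u$ and letting $n \to \infty$ (so that the trailing $12$ is pushed past any fixed coordinate) yields the infinite-sequence identity $\eta(u) = 12 \cdot \theta(u)$.

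Next, I would apply this to $u = y$, the fixed point of $\theta$. Since $\theta(y) = y$, one obtains $\eta(y) = 12 \cdot y$. Writing $y = 3 \cdot \tilde y$ with $\tilde y := \sigma(y)$, the left-hand side becomes $\eta(3)\,\eta(\tilde y) = 123 \cdot \eta(\tilde y)$, while the right-hand side becomes $12 \cdot 3\tilde y = 123 \cdot \tilde y$. Cancelling the common prefix $123$ gives $\eta(\tilde y) = \tilde y$, so $\sigma(y)$ is itself a fixed point of $\eta$.

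Finally, $\tilde y$ begins with $y_1 = 1$ (the second letter of $\theta(3) = 312$). Because $\eta(1) = 123$ is a proper extension of $1$, there is a unique infinite fixed point of $\eta$ starting with $1$, namely $x = \eta^\infty(1)$. Hence $\sigma(y) = x$, which is the claim. The only place that needs real care is the finite-to-infinite passage of the concatenation identity; the remaining moves are formal manipulation in the free monoid.
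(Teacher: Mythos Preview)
Your argument is correct, and it is not the paper's argument. The paper proves the prefix identity $3\,\eta^{n}(1)=\theta^{n}(1)\,3$ for all $n$ by a two-level induction, manipulating the auxiliary relations $\eta^{n}(2)=\eta^{n}(1)[\eta^{n-1}(3)]^{-1}$ and $\theta^{n}(2)=[\theta^{n-1}(3)]^{-1}\theta^{n}(1)$ in the free group. You instead exploit directly the rotation relation $\theta=\rho^{2}\eta$ that the paper records just before the proposition: the letter-wise check $12\cdot\theta(a)=\eta(a)\cdot 12$ is exactly that relation, and your telescoping extension to words and then to the infinite fixed point does the rest with no induction on powers. What your route buys is brevity and conceptual clarity (the conclusion is seen to be a formal consequence of $\theta=\rho^{2}\eta$ together with $y_{0}=3$); what the paper's route buys is an explicit description of the finite prefixes $\theta^{n}(1)$ in terms of $\eta^{n}(1)$, which your argument does not produce. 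Both are sound; yours is shorter.
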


\begin{proof} Note first that for all $n$
$$\eta^n(2)=\eta^n(1)[\eta^{n-1}(3)]^{-1}, \quad  \theta^n(2)=[\theta^{n-1}(3)]^{-1}\theta^n(1),$$
simply because $\eta(2)=\eta(1)3^{-1}$ and $\theta(2)=3^{-1}\theta(1)$.
The proposition will be proved if we show that for all $n$
$$3\eta^n(1)=\theta^n(1)3.$$
This will be done by induction over two levels. Obviously $3\eta(1)=3123=\theta(1)3$, and $3\eta^2(1)=312312123=\theta^2(1)3$.
We have for $n\ge 2$
\begin{eqnarray*}
3\eta^{n+1}(1)&=&3\eta^n(1)\eta^n(2)\eta^n(3)=\theta^n(1)3\,\eta^n(1)[\eta^{n-1}(3)]^{-1}\,3^{-1}\theta^n(1)3\\
              &=&\theta^n(1)\,\theta^n(1)3\,[3^{-1}\theta^{n-1}(1)3]^{-1}\,3^{-1}\theta^n(1)3\\
              &=&\theta^n(3)\,\theta^n(1)\,[\theta^{n-1}(3)]^{-1}\,\theta^n(1)3\\
              &=&\theta^n(3)\,\theta^n(1)\,\theta^n(2)3=\theta^{n+1}(1) 3.
\end{eqnarray*}
\end{proof}

We end this section with 4-symbol Fibonacci. The words of length 3 in the Fibonacci language
are 001, 010, 100, and 101. Coding these to the standard alphabet $\{1,2,3,4\}$ by $001\leftrightarrow 3, 010\leftrightarrow 1, 100 \leftrightarrow 2$  and $101 \leftrightarrow 4$, we obtain the standard form morphism
$$\hat{\varphi}_3(1) = 12,\quad \hat{\varphi}_3(2) = 3,\quad \hat{\varphi}_3(3) = 14,\quad \hat{\varphi}_3(4) = 3.$$
It has just one fixed point, which is
$$x^{(3)}= 1,2,3,1,4,1,2,3,1,2,3,1,4,1,2,3,\ldots .$$
This happens to be \seqnum{A138967}, which indeed deserves to be called the Infinite Fibonacci word on the alphabet $\{1,2,3,4\}$. To prove this, note that $\hat{\varphi}_3^2$ is the morphism
$$1 \rightarrow 241,\quad 2 \rightarrow 241,\quad 3 \rightarrow 24,\quad 4 \rightarrow 24.$$
If we make this morphism injective by merging symbols with equals images (cf.\ \cite{BDM2004}), we obtain the  morphism $\bar{1}\rightarrow \bar{1}\bar{3}\bar{1}, \bar{3}\rightarrow \bar{1}\bar{3}$, which is the square of the
Fibonacci morphism. Hence, indeed, the recipe in the Comments of \seqnum{A138967}:
``replace $\bar{1}$ by 1,2,3 and $\bar{3}$ by 1,4 in the infinite 2-symbol Fibonacci word"\!, generates $x^{(3)}$.

\section{A tale of two sequences}\label{sec:Cloitre}

In the previous section we showed that the sequence $(a_{n+1})= 1, 0, 2, 1, 0, 2, 0, 2,1, 0, 2, 1, 0,\ldots$ given in \seqnum{A120614} essentially equals the 3-symbol Fibonacci sequence. Its definition is
$$a_n=g_n-g_{n-1}\; \mathrm{where\:} g_k=\lfloor\Phi\lfloor k/\Phi\rfloor\rfloor\; \mathrm{and}\: \Phi=(1+\sqrt{5})/2. $$
Here $(g_{n})$ is \seqnum{A120613} given by
$$(g_{n+1})= 1, 1, 3, 4, 4, 6, 6, 8, 9, 9, 11, 12, 12, 14, 14, 16, 17, 17, 19, 19, 21, 22, 22, \ldots.$$
This sequence was contributed to OEIS by  Cloitre in 2006, but the only reference in its description is to a sequence in the paper \cite{Zeck} by  Griffiths, a paper which occurred in 2011.
On page 502 in \cite{Zeck} Griffiths states ``Incidentally, it is interesting to note that the sequence
 $$\Big\lfloor \frac{\lfloor (n+1)\Phi\rfloor-1}{\Phi}\Big\rfloor$$
is also related to the golden string. The first few terms are given by
$1, 1, 3, 4, 4, 6, 6, 8, 9, 9, 11,$ $12, 12, 14, 14, 16, 17, 17, 19, 19, 21, 22, 22,\dots .$"

\noindent Griffiths does  not give any references to other work.

\begin{problem}  How to show that $\lfloor\Phi\lfloor\frac{n}{\Phi}\rfloor\rfloor=\Big\lfloor \frac{\lfloor n\Phi\rfloor-1}{\Phi}\Big\rfloor$ for all $n\ge 1$?
\end{problem}
	
 Using $1/\Phi=\Phi-1$, the relation is equivalent to
$$\lfloor\Phi\lfloor n\Phi-n\rfloor\rfloor=\lfloor\Phi\lfloor n\Phi\rfloor- \lfloor n\Phi\rfloor -\Phi  +1 \rfloor.$$
Taking $-n$ out of the floor, and bringing  $\lfloor n\Phi\rfloor$ to the other side this is equivalent to
\begin{equation}\label{eq:eq1}
\lfloor\Phi\lfloor n\Phi\rfloor-n\Phi+ \lfloor n\Phi\rfloor\rfloor=\lfloor\Phi\lfloor n\Phi\rfloor -\Phi  +1 \rfloor \quad {\rm for\: all\: } n\ge 1.
\end{equation}
This clearly exposes the close relationship between the two sequences:
putting $\{n\Phi\}=n\Phi-\lfloor n\Phi\rfloor$, we of course have $-1\le -\{n\Phi\}\le 0$ and $ -\Phi  +1=-0.618\dots$.
We thus see that Equation \eqref{eq:eq1} is equivalent to
\begin{equation}\label{eq:eq2}
\{\Phi\lfloor n\Phi\rfloor\}<\Phi-1 \quad\Longleftrightarrow\quad \{\Phi\lfloor n\Phi\rfloor\}<\{n\Phi\} \quad {\rm for\: all\: } n\ge 1.
\end{equation}

Let $a$ be the basic sequence \seqnum{A000201} in the papers by Carlitz et al.~in \cite{Carlitz-1}, and \cite{Carlitz-2}:
$$a(n)=\lfloor n\Phi\rfloor.$$
Then Equation \eqref{eq:eq1} is equivalent to
\begin{equation}\label{eq:eq3}
a(a(n)-n)=a(a(n)-1)-a(n)-1.
\end{equation}

But this relation will not be found in the papers by Carlitz et al. The reason is that they consequently avoid the shift operator.
For instance, if $e(n):=\lfloor (n+1)/\Phi \rfloor $
 is the fifth basic sequence defined by Carlitz et al., then $e(n)=a(n+1)-(n+1),$ but this equation does not occur.
  We remark that $(e(n))$ is \seqnum{A060143} shifted, and \seqnum{A005206} shifted (Hofstadter G-sequence). It is amusing that Equation \eqref{eq:eq3}  can be rewritten as $a(n)=a(a(n)-1)-a(a(n)-n)-1$, which is a Hofstadter-Conway type of recursion (cf.\ \seqnum{A004001}).

After this digression we will now prove the equivalence in Equation \eqref{eq:eq2} 

\begin{proposition}\label{prop:GR}
For all positive integers $n$: $$\{\lfloor n\Phi\rfloor \Phi\}<\Phi-1 \quad\Longleftrightarrow\quad \{\lfloor n\Phi\rfloor \Phi\}<\{n\Phi\}.$$
\end{proposition}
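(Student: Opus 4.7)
My plan is to compute $\{\lfloor n\Phi\rfloor\Phi\}$ explicitly as a function of $\{n\Phi\}$, and then observe that each of the two inequalities in the claimed equivalence collapses to a single condition on $\{n\Phi\}$.

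Write $m:=\lfloor n\Phi\rfloor$ so that $n\Phi=m+\{n\Phi\}$. Using the defining identity $\Phi^2=\Phi+1$, the first step is to expand
$$m\Phi=(n\Phi-\{n\Phi\})\Phi=n\Phi^2-\Phi\{n\Phi\}=n\Phi+n-\Phi\{n\Phi\}=(m+n)+\{n\Phi\}-\Phi\{n\Phi\}.$$
Since $m+n$ is an integer and $0<\{n\Phi\}<1$ (irrationality of $\Phi$), the quantity $\{n\Phi\}-\Phi\{n\Phi\}=-(\Phi-1)\{n\Phi\}$ lies in $(-(\Phi-1),0)\subset(-1,0)$. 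Adding $1$ to bring it into $[0,1)$ gives the key identity
$$\{m\Phi\}=1-(\Phi-1)\{n\Phi\}.$$
This is the one real computational step; everything else is algebra on a single variable $t:=\{n\Phi\}$.

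Next I would substitute this expression into both inequalities and simplify, using the two useful facts $(\Phi-1)\Phi=1$ and $(2-\Phi)/(\Phi-1)=\Phi-1$ (both immediate from $\Phi^2=\Phi+1$). The left-hand inequality $\{m\Phi\}<\Phi-1$ becomes $1-(\Phi-1)t<\Phi-1$, i.e.\ $t>(2-\Phi)/(\Phi-1)=\Phi-1$. The right-hand inequality $\{m\Phi\}<t$ becomes $1-(\Phi-1)t<t$, i.e.\ $1<\Phi t$, i.e.\ $t>1/\Phi=\Phi-1$. Both reduce to the same condition $\{n\Phi\}>\Phi-1$, proving the equivalence.

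I do not expect any real obstacle: the only non-routine move is recognising that one should apply $\Phi^2=\Phi+1$ to $\lfloor n\Phi\rfloor\Phi$ in order to pull out an integer and isolate the fractional part. Once the clean formula $\{m\Phi\}=1-(\Phi-1)\{n\Phi\}$ is in hand, both inequalities are affine in $\{n\Phi\}$ and must coincide in their solution set; the little arithmetic miracle that makes the two thresholds equal is just the identity $\Phi(\Phi-1)=1$. No case distinction on $n$, and no appeal to Beatty/Zeckendorf structure, is needed.
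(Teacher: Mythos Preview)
Your proof is correct and essentially identical to the paper's: you derive the same key identity $\{\lfloor n\Phi\rfloor\Phi\}=1-(\Phi-1)\{n\Phi\}$ (the paper's Lemma~\ref{lem:main}) via $\Phi^2=\Phi+1$, and then reduce both inequalities to $\{n\Phi\}>\Phi-1$. The only cosmetic difference is that the paper phrases the final reduction geometrically (the line $y=(1-\Phi)x+1$ meets $y=x$ at $x=\Phi-1$) where you do it algebraically.
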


We  prove this proposition with the following lemma.

\begin{lemma} \label{lem:main}
Let $\Phi=\tfrac12(1+\sqrt{5})$ be the golden mean. Then for all positive integers $n$
$$\{\lfloor n\Phi\rfloor \Phi\}=(1-\Phi)\{n\Phi\}+1.$$
\end{lemma}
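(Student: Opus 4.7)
The plan is a direct computation using the identity $\Phi^2=\Phi+1$, reducing the problem to identifying the fractional part of a number of the form ``integer plus something in $(-1,0)$''.

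First I would set $\alpha:=\{n\Phi\}$, so that $\lfloor n\Phi\rfloor = n\Phi-\alpha$. Multiplying by $\Phi$ and using $\Phi^2=\Phi+1$ gives
\[
\lfloor n\Phi\rfloor\,\Phi \;=\; n\Phi^2-\alpha\Phi \;=\; n\Phi+n-\alpha\Phi.
\]
Substituting $n\Phi=\lfloor n\Phi\rfloor+\alpha$ into the right-hand side yields
\[
\lfloor n\Phi\rfloor\,\Phi \;=\; \bigl(\lfloor n\Phi\rfloor+n\bigr)\;+\;\alpha(1-\Phi),
\]
which expresses $\lfloor n\Phi\rfloor\,\Phi$ as an integer $M:=\lfloor n\Phi\rfloor+n$ plus the real number $\alpha(1-\Phi)$.

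Next I would locate $\alpha(1-\Phi)$ inside $(-1,0)$. Since $\Phi$ is irrational, $n\Phi$ is never an integer for $n\ge 1$, so $0<\alpha<1$ strictly; combined with $1-\Phi=-(\Phi-1)\in(-1,0)$, this gives
\[
1-\Phi \;<\; \alpha(1-\Phi) \;<\; 0,
\]
and in particular $\alpha(1-\Phi)\in(-1,0)$. Therefore $\lfloor \lfloor n\Phi\rfloor\Phi\rfloor=M-1$ and
\[
\{\lfloor n\Phi\rfloor\,\Phi\} \;=\; \alpha(1-\Phi)+1 \;=\; (1-\Phi)\{n\Phi\}+1,
\]
which is the claimed identity.

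There is really no serious obstacle here; the only point requiring care is the strict irrationality argument ensuring $\alpha>0$, so that $\alpha(1-\Phi)$ lies strictly in $(-1,0)$ and the integer part of $\lfloor n\Phi\rfloor\Phi$ is unambiguously $M-1$. As a sanity check, the lemma immediately yields Proposition~\ref{prop:GR}: the bound $\{\lfloor n\Phi\rfloor\Phi\}<\Phi-1$ becomes $(1-\Phi)\{n\Phi\}+1<\Phi-1$, i.e.\ $\{n\Phi\}>(2-\Phi)/(\Phi-1)=1$, which is absurd, so one side is vacuous; more usefully, the equivalent inequality $\{\lfloor n\Phi\rfloor\Phi\}<\{n\Phi\}$ becomes $(1-\Phi)\{n\Phi\}+1<\{n\Phi\}$, i.e.\ $\{n\Phi\}>1/\Phi=\Phi-1$, and the same rearrangement shows the first inequality is also equivalent to $\{n\Phi\}>\Phi-1$, giving the claimed equivalence.
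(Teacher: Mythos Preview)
Your proof of the lemma is correct and follows essentially the same direct computation as the paper: rewrite $\lfloor n\Phi\rfloor\Phi$ as the integer $\lfloor n\Phi\rfloor+n$ plus $(1-\Phi)\{n\Phi\}$ via $\Phi^2=\Phi+1$, then use the range of the latter term to read off the fractional part.

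One arithmetic slip in your closing sanity check (which does not affect the lemma itself): $(2-\Phi)/(\Phi-1)=(2-\Phi)\Phi=2\Phi-\Phi^2=\Phi-1$, not $1$. Hence $\{\lfloor n\Phi\rfloor\Phi\}<\Phi-1$ is equivalent to $\{n\Phi\}>\Phi-1$, exactly the condition you correctly derived for the second inequality; neither side is vacuous, and this common reformulation is precisely what gives Proposition~\ref{prop:GR}.
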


\begin{proof} All that we use is the relation $\Phi^2=\Phi+1$.
\begin{eqnarray*}
\{\lfloor n\Phi\rfloor \Phi\}&=&\lfloor n\Phi\rfloor \Phi-\lfloor \lfloor n\Phi\rfloor \Phi \rfloor\\
&=&n\Phi^2-\Phi\{n\Phi\}-\lfloor \lfloor n\Phi\rfloor \Phi \rfloor\\
&=& n\Phi+n-\Phi\{n\Phi\}-\lfloor \lfloor n\Phi\rfloor \Phi \rfloor\\
&=&\lfloor n\Phi\rfloor+\{n \Phi\}+n-\Phi\{n \Phi\}-\lfloor \lfloor n\Phi\rfloor \Phi \rfloor\\
&=& (1-\Phi)\{n \Phi\} +n+\lfloor n\Phi\rfloor-\lfloor \lfloor n\Phi\rfloor \Phi \rfloor\\
&=& (1-\Phi)\{n \Phi\}+1.
\end{eqnarray*}
Here the last step holds because the integer $n+\lfloor n\Phi\rfloor-\lfloor \lfloor n\Phi\rfloor \Phi \rfloor$ has to be equal to 1, since
$0\le\{\lfloor n\Phi\rfloor \Phi\}\le 1$, and $-0.73<1-\Phi\le (1-\Phi)\{n \Phi\} \le 0$.
\end{proof}

Proposition \ref{prop:GR} follows directly from  Lemma \ref{lem:main} by noting that the line $y=(1-\Phi)x+1$ has a negative slope, and cuts the line $y=x$
at $x=\Phi-1$.

The following well known relationship (see, e.g.,  \cite[Equation (1.11)]{Carlitz-2})  is  a corollary to the proof of Lemma \ref{lem:main}.

\begin{corollary} \label{cor:car}
For all positive integers $n$: $\lfloor \lfloor n\Phi\rfloor \Phi \rfloor=\lfloor n\Phi\rfloor+n-1.$
\end{corollary}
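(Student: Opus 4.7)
The plan is to observe that the corollary is essentially already established inside the proof of Lemma \ref{lem:main}. Indeed, the chain of equalities there reached the line
$$\{\lfloor n\Phi\rfloor \Phi\} = (1-\Phi)\{n \Phi\} + n + \lfloor n\Phi\rfloor - \lfloor \lfloor n\Phi\rfloor \Phi \rfloor,$$
and the argument concluding the lemma pinned down the integer $n + \lfloor n\Phi\rfloor - \lfloor \lfloor n\Phi\rfloor \Phi \rfloor$ as being exactly $1$, using that $\{\lfloor n\Phi\rfloor \Phi\} \in [0,1)$ while $(1-\Phi)\{n \Phi\} \in (-0.73,0]$. Rearranging this forced equality gives exactly $\lfloor \lfloor n\Phi\rfloor \Phi \rfloor = \lfloor n\Phi\rfloor + n - 1$, which is the claim.

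So my write-up of the proof would simply cite the proof of Lemma \ref{lem:main} and extract this integer identity. If a fully self-contained derivation is preferred, I would run the same short computation again: expand $\lfloor n\Phi\rfloor \Phi = n\Phi^2 - \Phi\{n\Phi\} = n\Phi + n - \Phi\{n\Phi\}$ using $\Phi^2 = \Phi + 1$, then substitute $n\Phi = \lfloor n\Phi\rfloor + \{n\Phi\}$ to obtain
$$\lfloor n\Phi\rfloor \Phi = \lfloor n\Phi\rfloor + n + (1-\Phi)\{n\Phi\}.$$
Since $\Phi$ is irrational and $n \geq 1$, we have $\{n\Phi\} \in (0,1)$, so $(1-\Phi)\{n\Phi\} \in (1-\Phi,0) \subset (-1,0)$. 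Therefore $\lfloor n\Phi\rfloor \Phi$ lies strictly between $\lfloor n\Phi\rfloor + n - 1$ and $\lfloor n\Phi\rfloor + n$, and taking floors yields the corollary.

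There is no real obstacle here: the work was already done in Lemma \ref{lem:main}, and the corollary is just a matter of isolating the hidden integer equation that made the last step of that proof go through. The only point requiring a word of care is that $\{n\Phi\} \neq 0$, which is immediate from the irrationality of $\Phi$, so that the inequality $(1-\Phi)\{n\Phi\} < 0$ is strict and the floor drops by exactly one.
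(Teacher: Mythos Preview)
Your proposal is correct and matches the paper's approach exactly: the paper states the corollary as following directly from the proof of Lemma~\ref{lem:main}, precisely because that proof establishes $n+\lfloor n\Phi\rfloor-\lfloor \lfloor n\Phi\rfloor \Phi \rfloor=1$. Your optional self-contained rewrite is the same computation unpacked, with the nice added remark that irrationality of $\Phi$ ensures strictness.
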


\bigskip
\hrule
\bigskip

\noindent{\bf{MSC}:  68R15, 37B10}
\noindent 2010 {\it Mathematics Subject Classification}: Primary 68R15; Secondary 37B10, 11B85.

\noindent \emph{Keywords: } Morphism, substitution, pure morphic, Fibonacci word, standard form.

\bigskip
\hrule

\end{document}